\newtheorem{theorem}{Theorem}[section]
\newtheorem{lemma}[theorem]{Lemma}
\newtheorem{claim}[theorem]{Claim}
\newtheorem{corollary}[theorem]{Corollary}
\theoremstyle{definition}
\newtheorem{definition}[theorem]{Definition}
\theoremstyle{remark}
\numberwithin{equation}{section}
\def\fnote#1{\footnote}
\def\ignora#1{}
\def\n3#1{\left\vert  \! \left\vert \! \left\vert \, #1 \, \right\vert \!
  \right\vert \! \right\vert }
\begin{document}

\title{ Lipschitz slices versus linear slices in Banach spaces }

\author{Julio Becerra Guerrero, Gin{\'e}s L{\'o}pez-P{\'e}rez and Abraham Rueda Zoca}
\address{Universidad de Granada, Facultad de Ciencias.
Departamento de An\'{a}lisis Matem\'{a}tico, 18071-Granada
(Spain)} \email{glopezp@ugr.es, juliobg@ugr.es,
arz0001@correo.ugr.es}
\thanks{The first author was partially supported by MEC (Spain) Grant MTM2014-58984-P and Junta de Andaluc\'{\i}a grants
FQM-0199, FQM-1215. The second author was partially supported by
MINECO (Spain) Grant MTM2015-65020-P and Junta de Andaluc\'{\i}a Grant
FQM-185.} \subjclass{46B20, 46B22. Key words:
   Radon-Nikodym property, Lipschitz slices, Lipchitz topology.}

\maketitle\markboth{J. Becerra, G. L\'{o}pez and A.
Rueda}{Diametral diameter two properties.}

\begin{abstract}
The aim of this note is study the topology generated by Lipschitz slices in the unit sphere of a Banach space. We prove that the above topology agrees with the weak topology in the unit sphere and, as a consequence, we obtain 
 Lipschitz characterizations of classical linear topics in Banach spaces, as Radon-Nikodym property, convex point of continuity property and strong regularity, which shows that the above classical linear properties only depend on the natural uniformity in the Banach space given by the metric and the linear structure. 
\end{abstract}

\section{Introduction}
 It
has been recently considered (see \cite{kmmw}) the study of the
Daugavet equation for Lipschitz operators, which has resulted in
the introduction of the so-called ``Lip-slices", getting a
charaterization of the Daugavet property in terms of such subsets
of the unit sphere. Motivated for these facts,
  we will define
the Lip-topology on the sphere of a Banach space as the topology
generated by the Lip-slices of the unit sphere and, even though it
seems natural to think that the weak topology is strictly coarser than
the Lip-topology, we will actually prove in Theorem \ref{caratopolo} that both topologies
agree, which will allow us to improve the
characterization of the Daugavet property given in  \cite[Theorem 2.6]{kmmw} and to extend the given one in \cite[Lemma 3]{shi} . Finally, we get as a consequence in Theorem
\ref{caralipsiso}  that a Banach space $X$ satisfies the Radon-Nikodym
property if, and only if, the unit sphere of every equivalent norm
in $X$ contains Lipschitz slices with arbitrarily small diameter,
which  shows that the well known characterization of
Radon-Nikodym property for Banach spaces in terms of slices is
uniform, that is, it only depends on the natural uniformity in $X$ given by the metric and the linear structure of the Banach space. Similar characterizations are too obtained for the convex point of continuity property and the strong regularity in Banach spaces. We refer to \cite{ggms} for background on these classical properties.

We shall introduce some notation. We consider real Banach spaces,
$B_X$ (resp. $S_X$) denotes the closed unit ball (resp. sphere) of
the Banach space $X$ and
$X^*$ stands for the dual space of $X$. A slice of a bounded
subset $C$ of $X$ is a set of the form
$$S(C,f,\alpha):=\{x\in C:\ f( x)>M-\alpha\},$$
where $f\in X^*$, $f\neq 0$, $M=\sup_{x\in C}f(x)$ and $\alpha>0$.   $w$   denotes the weak  topology of a Banach space and it is well known that the family of slices is a subbasis of the weak topology
for bounded subsets in $X$.

Let $X$ be a Banach space and $f:X\longrightarrow \mathbb R$ a
Lipschitz function. According to \cite{cla}, the generalized
derivative of at a point $x\in X$ in the direction $v\in X$ is
defined by
$$f^\circ(x,v):=\limsup\limits_{ y\rightarrow x,
t\searrow 0}\frac{f(y+tv)-f(y)}{t}.$$
Such a limit always exists from the Lipschitz condition.
Moreover, it is a sublinear and positively homogeneous function in
the variable $v$ \cite[Proposition 2.1.1]{cla}.

In addition, the generalized gradient of $f$ at $x$ is defined as
follows
$$\partial f(x):=\{x^*\in X^*: f^\circ(x,v)\geq x^*(v)\ \forall v\in X\}.$$
Given $v\in X$ it follows that \cite[Proposition 2.1.2]{cla}
$$f^\circ(x,v)=\max\limits_{x^*\in \partial f(x)}x^*(v)\ \ \forall x\in X.$$

\section{Lipschitz diameter two properties}

\begin{definition}

Let $X$ be a Banach space. According to \cite{kmmw}, a Lip-slice
of $S_X$ is a non-empty set of the form
$$S(S_X,f,\varepsilon):=\left\{\frac{x_1-x_2}{\Vert x_1-x_2\Vert}: \frac{f(x_1)-f(x_2)}{\Vert x_1-x_2\Vert}>\Vert f\Vert-\varepsilon \right\}.$$
\end{definition}

In \cite{kmmw} it is proved that if a Banach space has the
Daugavet property then, for each $x\in S_X$ , $S$ a Lip-slice of
$S_X$ and $\varepsilon>0$, there exists $y\in S$ such that
$$\Vert x+y\Vert>2-\varepsilon.$$
On the other hand, note that the class of Lip-slices defines a
subbasis of a topology on $S_X$. Indeed, define $\mathcal B$ to be
the class of Lip-slices of $S_X$. Then
$$\bigcup\limits_{S\in \mathcal B}S=S_X$$
as clearly $S(S_X,\Vert\cdot\Vert,\alpha)=S_X$ for each $\alpha\in
\mathbb R^+$ (indeed, given $x\in S_X$ one has that
$\frac{2x-x}{\Vert 2x-x\Vert}\in S$).

Now we can give the following

\begin{definition}

Let $X$ be a Banach space. We will define the Lip-topology on $S_X$ as the topology generated
by the subbasis $\mathcal B$.

\end{definition}

At that point it seems natural to wonder whether the Lip and the
weak topology on $S_X$ are equal. It is obvious, as slices of
$S_X$ are Lip-slices of $S_X$, that the weak topology is contained
in the Lip-topology. Our aim is to prove that both topologies actually
agree. The key of the proof will be the following

\begin{lemma}

Let $X$ be a Banach space and $S$ be a Lip-slice of $S_X$. Then for each $x\in S$ there exists $T$ a slice of $S_X$ such that
$$x\in T\subseteq S.$$
\end{lemma}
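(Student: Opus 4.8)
The plan is to prove exactly the statement that every Lip-slice is, around each of its points, a union of ordinary slices; this is what the displayed inclusion $x\in T\subseteq S$ asserts. So fix $x\in S:=S(S_X,f,\varepsilon)$ and pick witnesses $x_1\neq x_2$ with $x=\frac{x_1-x_2}{\Vert x_1-x_2\Vert}$ and $\frac{f(x_1)-f(x_2)}{\Vert x_1-x_2\Vert}>\Vert f\Vert-\varepsilon$, where $\Vert f\Vert$ is the Lipschitz constant of $f$. First I would manufacture a \emph{linear} functional out of $f$ that sees the same rate of increase along the segment $[x_2,x_1]$. The natural tool is Lebourg's mean value theorem \cite[Theorem 2.3.7]{cla}: there exist a point $c$ in the open segment $(x_2,x_1)$ and a functional $g\in\partial f(c)$ such that $f(x_1)-f(x_2)=g(x_1-x_2)$. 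Dividing by $\Vert x_1-x_2\Vert$ yields $g(x)=\frac{f(x_1)-f(x_2)}{\Vert x_1-x_2\Vert}>\Vert f\Vert-\varepsilon$.

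The crucial observation, and the reason the argument can succeed, is that $g$ has norm squeezed close to $\Vert f\Vert$. Since $g\in\partial f(c)$ we have $g(v)\leq f^\circ(c,v)$ for all $v$, and the Lipschitz condition gives $f^\circ(c,v)\leq\Vert f\Vert\,\Vert v\Vert$, whence $\Vert g\Vert\leq\Vert f\Vert$. On the other hand $\Vert g\Vert\geq g(x)>\Vert f\Vert-\varepsilon$ because $\Vert x\Vert=1$. Thus $\Vert f\Vert-\varepsilon<\Vert g\Vert\leq\Vert f\Vert$, and in particular $g\neq 0$ (the degenerate situation $\Vert f\Vert\leq\varepsilon$, in which $S=S_X$, is dispatched trivially by taking any slice through $x$). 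This two-sided estimate is precisely what reconciles the two competing requirements on the target slice.

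With $g$ in hand I would take $T=S(S_X,g,\alpha)$ for any $\alpha$ chosen in the open interval $\Vert g\Vert-g(x)<\alpha<\varepsilon-(\Vert f\Vert-\Vert g\Vert)$. This interval is nonempty exactly because $g(x)>\Vert f\Vert-\varepsilon$ (rearranging the two endpoints gives back this inequality). Since $\sup_{S_X}g=\Vert g\Vert$, the left inequality $g(x)>\Vert g\Vert-\alpha$ shows $x\in T$, so one inclusion is immediate.

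The heart of the proof, and the step I expect to be the main obstacle, is the reverse inclusion $T\subseteq S$, i.e.\ converting the infinitesimal information carried by $g$ into an honest pair witnessing membership in $S$. Take $y\in T$, so $g(y)>\Vert g\Vert-\alpha$; then $f^\circ(c,y)\geq g(y)>\Vert g\Vert-\alpha$. Choosing $\delta>0$ with $\delta<\varepsilon-(\Vert f\Vert-\Vert g\Vert)-\alpha$ (positive by the right endpoint above) and unwinding the definition of $f^\circ(c,y)$ as a $\limsup$, I can find $w\in X$ and $s>0$ with $\frac{f(w+sy)-f(w)}{s}>f^\circ(c,y)-\delta>\Vert f\Vert-\varepsilon$. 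Setting $z_1=w+sy$ and $z_2=w$ gives $z_1\neq z_2$, $\frac{z_1-z_2}{\Vert z_1-z_2\Vert}=y$ since $\Vert y\Vert=1$, and $\frac{f(z_1)-f(z_2)}{\Vert z_1-z_2\Vert}>\Vert f\Vert-\varepsilon$, so $y\in S$. The balance of constants here is delicate and works only thanks to the squeeze $\Vert f\Vert-\varepsilon<\Vert g\Vert\leq\Vert f\Vert$ from the second paragraph; the rest is bookkeeping.
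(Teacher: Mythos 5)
Your main-line argument is correct and is, in substance, the paper's own proof with the first step packaged differently. Where you invoke Lebourg's mean value theorem \cite[Theorem 2.3.7]{cla} to produce $c\in(x_2,x_1)$ and $g\in\partial f(c)$ with $g(x)>\Vert f\Vert-\varepsilon$, the paper manufactures the same object by hand: it restricts $f$ to the segment, uses a.e.\ differentiability of the Lipschitz function $F=f\circ\phi$ together with $F(1)-F(0)=\int_0^1 F'(t)\,dt$ to find a point $z$ of the segment with $f^\circ(z,x)>\Vert f\Vert-\varepsilon$ (after normalizing $\Vert f\Vert=1$), and then takes $\varphi\in\partial f(z)$ attaining the maximum in Clarke's formula $f^\circ(z,x)=\max_{\varphi\in\partial f(z)}\varphi(x)$. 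Your reverse inclusion $T\subseteq S$ --- bounding $g(y)\le f^\circ(c,y)$ and unwinding the $\limsup$ to get a pair $(w+sy,w)$ witnessing $y\in S$ --- is identical to the paper's. If anything, your explicit squeeze $\Vert f\Vert-\varepsilon<\Vert g\Vert\le\Vert f\Vert$ and the window $\Vert g\Vert-g(x)<\alpha<\varepsilon-(\Vert f\Vert-\Vert g\Vert)$ are more careful than the paper, which simply sets $T=\{v\in S_X:\varphi(v)>1-\varepsilon\}$ and leaves implicit that this is a slice in the stated sense (i.e.\ of the form $g(v)>\sup_{S_X}g-\alpha$ with $\alpha>0$ and $g \neq 0$).

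The one genuinely false step is your degenerate-case remark: $\Vert f\Vert\le\varepsilon$ does \emph{not} imply $S=S_X$. Counterexample: $X=\mathbb{R}^2$, $f=x^*$ linear with $\Vert x^*\Vert=1$, $\varepsilon=3/2$; then $S=\{v\in S_X:\ x^*(v)>-1/2\}\neq S_X$, so ``any slice through $x$'' is not contained in $S$. In that example Lebourg gives $g=x^*\neq 0$, so no fallback is needed; but your proof invokes the fallback exactly when $g=0$ (possible only if $\varepsilon>\Vert f\Vert$; e.g.\ $f(u)=\vert u_1\vert$ on $\mathbb{R}^2$ with witnesses $\pm e_1$ forces $c=0$ and $g=0$), and there the false claim is what you lean on. The repair is short: if $g=0$ but $\partial f(c)\neq\{0\}$, pick $\psi\in\partial f(c)\setminus\{0\}$ and replace $g$ by $t\psi$ with $t\in(0,1]$ small; since $\partial f(c)$ is convex and contains $0$, one has $t\psi\in\partial f(c)$, $t\psi\neq 0$, and $t\psi(x)>\Vert f\Vert-\varepsilon$ (the right-hand side is negative in this regime), after which your argument runs verbatim. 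If instead $\partial f(c)=\{0\}$, then $f^\circ(c,v)=0>\Vert f\Vert-\varepsilon$ for every $v\in S_X$, and your own $\limsup$-unwinding step shows every $v\in S_X$ lies in $S$, i.e.\ $S=S_X$, so any slice through $x$ genuinely works. Note that the paper silently ignores this same regime (its maximizing $\varphi$ could also be $0$ when $\varepsilon\ge 1$), so with this two-line patch your proof is complete and, in its quantitative bookkeeping, tidier than the published one.
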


\begin{proof}
Assume that $S:=S(S_X,f,\varepsilon)$. Consider $x,y\in X, x\neq
y$ such that $\frac{y-x}{\Vert y-x\Vert}\in S$, i.e.
$$f(y)-f(x)>(1-\varepsilon)\Vert y-x\Vert.$$
Define $\phi:[0,1]\longrightarrow [x,y]$ by $\phi(t):=\lambda
y+(1-\lambda)x\ \ t\in [0,1]$ and
$F:=f\circ\phi:[0,1]\longrightarrow \mathbb R$. As $F$ is a
Lipschitz function we have that $F$ is derivable almost
everywhere. Now
$$(1-\varepsilon)\Vert y-x\Vert<f(y)-f(x)=F(1)-F(0)=\int_0^1 F'(t)\ dt.$$
From here we can find $t$ such that $F'(t)$ exists and it is
bigger that $(1-\varepsilon)\Vert y-x\Vert$. Thus given
$z=\phi(t)$ it follows
$$\limsup\limits_{h\rightarrow 0} \frac{f(z+h(y-x))-f(z)}{h}>(1-\varepsilon)\Vert y-x\Vert.$$
Indeed, given $h>0$ small enough one has
$$\phi(t+h)=(t+h)y+(1-(t+h))x=ty+(1-t)x+h(y-x).$$
As $ty+(1-t)x=\phi(t)=z$, we conclude that
$$\frac{F(t+h)-F(t)}{h}=\frac{f(z+h(y-x))-f(z)}{h}.$$
As there exists $\lim\limits_{h\rightarrow
0}\frac{F(t+h)-F(t)}{h}=\lim\limits_{h\rightarrow
0}\frac{f(z+h(y-x))-f(z)}{h}$ we conclude that
$$\lim\limits_{h\rightarrow 0}\frac{f(z+h(y-x))-f(z)}{h}=\limsup\limits_{h\rightarrow 0} \frac{f(z+h(y-x))-f(z)}{h}\leq f^\circ(z,y-x).$$
As the generalized derivative is positively homogeneous we
conclude that  $f^\circ(z,\frac{y-x}{\Vert
y-x\Vert})>1-\varepsilon$. Hence \cite[Proposition 2.1.2]{cla} there exists $\varphi\in \partial
f(x)$ such that $\varphi(\frac{y-x}{\Vert
y-x\Vert})>1-\varepsilon$. Consequently
$$\frac{y-x}{\Vert y-x\Vert}\in T:=\{v\in S_X\ /\ \varphi(v)>1-\varepsilon\}.$$
Now we shall prove that $T\subseteq S$. To this aim pick $v\in T$,
so $\varphi(v)>1-\varepsilon$. As
$$\varphi(v)\leq f^\circ(z,v)=\limsup\limits_{y\rightarrow z,t\searrow 0}\frac{f(y+tv)-f(y)}{t}$$
we can find $t>0$ and $y$ close enough to $z$ such that
$1-\varepsilon<\frac{f(y+tv)-f(y)}{t}$. From the definition of $S$
one has
$$S\ni \frac{y+tv-y}{\Vert y+tv-y\Vert}=\frac{tv}{\Vert tv\Vert}=v,$$
so $\frac{y-x}{\Vert y-x\Vert}\in T\subseteq S$.
\end{proof}

As a consequence we get the desired result.

\begin{theorem}\label{caratopolo}
Let $X$ be a Banach space. Then the weak and the Lip-topology agree on $S_X$.
\end{theorem}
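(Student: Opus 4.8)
The weak topology and the Lip-topology agree on $S_X$.

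**Key lemma available:** For every Lip-slice $S$ and every $x \in S$, there exists a *linear* slice $T$ with $x \in T \subseteq S$.

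Let me think about this.The plan is to prove that the two topologies coincide by showing mutual containment, where one direction is already noted in the excerpt and the other follows almost immediately from the Lemma. Since the family of slices is a subbasis for the weak topology on $S_X$ and every slice is a Lip-slice, the weak topology is contained in the Lip-topology; this inclusion is observed before the Lemma. So the entire content is the reverse inclusion: every Lip-open set is weakly open.

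To establish this, I would argue that it suffices to show every member of the defining subbasis $\mathcal{B}$ of the Lip-topology, that is, every Lip-slice $S$, is weakly open. Indeed, once every subbasic Lip-open set is weakly open, finite intersections of them (the basic Lip-open sets) are weakly open, and arbitrary unions of those are weakly open, so the whole Lip-topology is contained in the weak topology. Thus the heart of the matter reduces to a single claim: each Lip-slice is weakly open in $S_X$.

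To see that a Lip-slice $S$ is weakly open, I would invoke the Lemma directly. Fix an arbitrary point $x \in S$. By the Lemma there is a linear slice $T$ of $S_X$ with $x \in T \subseteq S$. Since linear slices are weakly open in $S_X$ (they form a subbasis of the weak topology), $T$ is a weak neighborhood of $x$ contained in $S$. As $x \in S$ was arbitrary, every point of $S$ has a weak neighborhood inside $S$, which is precisely the statement that $S$ is weakly open. Combining this with the trivially available inclusion from the other direction yields that the weak and Lip topologies agree on $S_X$.

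The conceptual work has really all been front-loaded into the Lemma, so I do not expect a serious obstacle in the proof of the Theorem itself; the only point requiring a moment of care is the purely topological bookkeeping, namely that ``every subbasic element of the Lip-topology is weakly open'' genuinely suffices to conclude ``the Lip-topology is contained in the weak topology.'' This is because the weak topology, being a topology, is closed under the finite intersections and arbitrary unions used to generate the Lip-topology from its subbasis $\mathcal{B}$; hence any set in the topology generated by $\mathcal{B}$ already lies in the weak topology. With both inclusions in hand the two topologies are equal, completing the argument.
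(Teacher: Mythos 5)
Your proposal is correct and follows essentially the same route as the paper: both reduce the theorem to the Lemma, using it to produce, around each point of a Lip-open set, a linear slice (hence a relatively weakly open neighborhood) contained in that set. The only cosmetic difference is that you verify weak openness directly for the subbasic Lip-slices and then appeal to closure of the weak topology under finite intersections and unions, whereas the paper applies the Lemma coordinatewise inside a basic open set $W=\bigcap_{i=1}^n S(S_X,f_i,\varepsilon_i)$ and intersects the resulting slices $T_i$; the two bookkeeping schemes are interchangeable.
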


\begin{proof}
As we have pointed out it is clear that the weak topology on $S_X$
is contained in the Lip-topology on $S_X$.

For the reverse inclusion, pick $W$ to be a basic open set of the
Lip-topology of $S_X$. Then $W:=\bigcap\limits_{i=1}^n
S(S_X,f_i,\varepsilon_i)$ is a finite and non-empty intersection
of Lip-slices. Pick $x\in W$. So, for each $i\in\{1,\ldots, n\}$,
we can find by above Lemma $T_i$ a slice of $S_X$ such that
$$x\in T_i\subseteq S(S_X,f_i,\varepsilon_i).$$
So, if we define $U:=\bigcap\limits_{i=1}^n T_i$, one has
$$x\in U\subseteq W,$$
so we are done.
\end{proof}

From here we can improve \cite[Corollary 2.6]{kmmw} and get a
characterization of Daugavet property which extends  
\cite[Lemma 3]{shi}.

\begin{corollary}

Let $X$ be a Banach space. Then $X$ has the Daugavet property if, and only if, for every $x\in
S_X$, nonempty open subset for the  Lip topology  $U$ of $S_X$  
and $\varepsilon>0$ there exists $y\in U$ such that
$$\Vert x+y\Vert>2-\varepsilon.$$
\end{corollary}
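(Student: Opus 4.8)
The plan is to derive the corollary essentially for free from Theorem \ref{caratopolo}, combined with the classical descriptions of the Daugavet property via slices and via relatively weakly open sets. The key observation is that, by Theorem \ref{caratopolo}, the Lip-topology and the weak topology coincide on $S_X$; hence a subset $U\subseteq S_X$ is nonempty and open for the Lip-topology if, and only if, it is a nonempty relatively weakly open subset of $S_X$. Thus the statement is verbatim the ``relatively weakly open'' formulation of the Daugavet condition, and the whole task reduces to matching this formulation with the known characterizations.

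For the direct implication I would proceed as follows. Assume $X$ has the Daugavet property and fix $x\in S_X$, a nonempty Lip-open set $U$, and $\varepsilon>0$. By the observation above, $U$ is a nonempty relatively weakly open subset of $S_X$. It is well known (see \cite[Lemma 3]{shi}, and compare with \cite[Theorem 2.6]{kmmw}) that the Daugavet property forces, for every $x\in S_X$, every nonempty relatively weakly open subset $U$ of $S_X$, and every $\varepsilon>0$, the existence of $y\in U$ with $\Vert x+y\Vert>2-\varepsilon$. Applying this to our $U$ provides the desired $y$. Note that invoking the single-Lip-slice statement from \cite{kmmw} would \emph{not} suffice here, since a basic Lip-open set is only a finite intersection of Lip-slices and is therefore smaller than any one of them; this is precisely why the weakly open form of the characterization is needed.

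For the converse I would exploit that slices are weakly open. Assume the stated condition holds for every nonempty Lip-open set. Given $f\in S_{X^*}$ and $\alpha>0$, the slice $S(S_X,f,\alpha)$ is a nonempty relatively weakly open subset of $S_X$, hence a nonempty Lip-open set by Theorem \ref{caratopolo}. Therefore the hypothesis applies to it, so for every $x\in S_X$ and every $\varepsilon>0$ there is $y\in S(S_X,f,\alpha)$ with $\Vert x+y\Vert>2-\varepsilon$. Since $S(S_X,f,\alpha)\subseteq S(B_X,f,\alpha)$, this same $y$ witnesses the analogous property for the ball slice $S(B_X,f,\alpha)$; as every nonzero functional can be normalized, every slice of $B_X$ is reached in this way, and the classical slice characterization of the Daugavet property yields that $X$ has the Daugavet property.

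Regarding the difficulty: once Theorem \ref{caratopolo} is in hand, the corollary carries no genuinely new geometric content, being a dictionary translation between the Lip-topology and the weak topology. The only points requiring care are citing the correct weakly open version of the Daugavet characterization in the direct implication, and, in the converse, the harmless passage from slices of $S_X$ to slices of $B_X$ so that the hypothesis restricted to slices already triggers the classical characterization.
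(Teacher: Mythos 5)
Your proposal is correct and takes essentially the same route as the paper: Theorem~\ref{caratopolo} (equivalently, that every nonempty finite intersection of Lip-slices contains a nonempty relatively weakly open subset of $S_X$) combined with the relatively-weakly-open characterization of the Daugavet property from \cite[Lemma 3]{shi}. Your converse via the classical slice characterization is only a harmless variant of the paper's direct appeal to the converse of Shvidkoy's lemma, since slices of $S_X$ are themselves Lip-slices and hence Lip-open.
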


\begin{proof}
It is a straightforward application of \cite[Lemma 3]{shi} and the
fact that every finite and non-empty intersection of Lip-slices
contains a non-empty relatively weakly open subset of $S_X$.
\end{proof}

According to \cite[Definition 4.10]{ksw}, a locally convex
topology $\tau$ on a Banach space $X$ is said to be a
\textit{Daugavet topology} if given $x,y\in S_X$, $U$ a $\tau$
neighborhood of $y$ and $\varepsilon\in\mathbb R^+$ there exists
an element $z\in U\cap S_X$ such that $\Vert
x+z\Vert>2-\varepsilon$.

Above Corollary says that Lip-topology is a Daugavet topology in a
more general setting.

Now we shall exibit a characterization of Radon-Nikodym property (RNP), convex point of continuity property (CPCP) and strong regularity (SR) in terms of Lip-slices. (We refer to \cite{ggms} for background around these properties.) To this aim we shall need to prove a characterization of previous properties in terms of the diameter of slices (respectively non-empty relatively weakly open subsets, convex combination of slices) of the unit ball in each equivalent renorming of the space. It is well known that such characterization holds for the RNP (see \cite{DP}). Now we shall prove similar statements for CPCP and SR. To this aim we shall need the following two preliminary results.

\begin{lemma}\label{simetrizante}

Let $X$ be a Banach space. Then:

\begin{enumerate}
\item If $X$ fails to be strongly regular then, for each $\varepsilon>0$, there exists $C\subseteq X$ a closed, bounded, symmetric and convex subset such that $diam(K)=2$ and each convex combination of slices of $K$ has diameter, at least, $1-\varepsilon$.

\item If $X$ fails the CPCP then, for each $\varepsilon>0$, there exists $K\subseteq X$ a closed, convex and bounded such that $diam(K)=2$ and every non-empty relatively weakly open subset of $K$ has diameter, at least, $1-\varepsilon$.
\end{enumerate}
\end{lemma}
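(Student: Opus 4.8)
The plan is to handle both parts by a single mechanism: start from a ``bad'' bounded closed convex set, amplify the relevant diameter ratio until it is close to the full diameter, and only then symmetrize and rescale to diameter $2$. First I would unwind the hypotheses. If $X$ fails strong regularity there is a bounded closed convex set $C\subseteq X$ and a constant $\delta_0>0$ such that every convex combination of slices of $C$ has diameter at least $\delta_0$; likewise, if $X$ fails CPCP there is such a $C$ for which every nonempty relatively weakly open subset has diameter at least $\delta_0$. Writing $D_0=\mathrm{diam}(C)$, the quantity to control is the ratio $\delta_0/D_0$: the target $\mathrm{diam}(K)=2$ together with the bound $1-\varepsilon$ amounts to a ratio close to $1/2$, and the symmetrization step in (1) will cost exactly a factor $1/2$, which is what pins down these constants.

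The heart of the argument is an amplification step. For part (1) I would choose a convex combination of slices $W_0$ of $C$ whose diameter is within $\eta$ of the infimal value, so that $\delta_0\le \mathrm{diam}(W_0)<\delta_0+\eta$. The key claim is that \emph{every} convex combination of slices of $W_0$ still has diameter at least $\delta_0$; granting this, the ratio for $W_0$ is at least $\delta_0/(\delta_0+\eta)$, which exceeds $1-\varepsilon$ once $\eta$ is small. The claim rests on the standard stability property of convex combinations of slices (cf. \cite{ggms}): a convex combination of slices of a convex combination of slices of $C$ refines, up to an arbitrarily small norm perturbation, to a convex combination of slices of $C$, whose diameter is bounded below by $\delta_0$. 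For part (2) the analogous step replaces $W_0$ by a nonempty relatively weakly open subset $U_0$ of $C$ of near-minimal diameter and then by its closed convex hull; since a relatively weakly open subset of a relatively weakly open subset of $C$ is again one in $C$, the pieces of $U_0$ remain large, and the remaining work is to check that forming the convex hull does not create small relatively weakly open subsets. Verifying these two stability statements is the main obstacle, and is exactly the place where part (2) is more delicate than part (1).

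Once a set $B$ with $\mathrm{diam}(B)=D$ and bad-diameter at least $(1-\varepsilon)D$ is in hand, I would symmetrize for part (1) by setting $K_0=\overline{\tfrac12(B-B)}$. This set is closed, bounded, convex and symmetric, and $\mathrm{diam}(K_0)=\mathrm{diam}(B)=D$. For any $g\in X^*$ and $\gamma>0$ one has the inclusion $\tfrac12\bigl(S(B,g,\gamma)-S(B,-g,\gamma)\bigr)\subseteq S(K_0,g,\gamma)$, so every convex combination of slices of $K_0$ contains a translate of $\tfrac12 A$ for some convex combination of slices $A$ of $B$; freezing the subtracted copy gives diameter at least $\tfrac12(1-\varepsilon)D=\tfrac{1-\varepsilon}{2}\,\mathrm{diam}(K_0)$. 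Rescaling by the factor $2/D$ then produces the desired $K$ with $\mathrm{diam}(K)=2$ and all convex combinations of slices of diameter at least $1-\varepsilon$.

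For part (2) no symmetrization is needed: the boosted set already has all nonempty relatively weakly open subsets of diameter at least $(1-\varepsilon)\,\mathrm{diam}$, so rescaling it to diameter $2$ immediately yields the bound $1-\varepsilon$, in fact with room to spare. The routine points I would relegate to the end are that closures and convex hulls do not change diameters, that the rescaled sets are genuinely closed, bounded and convex, and that $\mathrm{diam}(K)=2$ is achieved exactly by the rescaling. The only genuinely substantial input is the amplification step of the second paragraph.
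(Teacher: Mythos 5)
Your part (1) can in fact be completed, and along a route genuinely different from the paper's: where the paper simply quotes \cite[Proposition 4.10 (b)]{scsewe} to get the amplified set (diameter $1$, all convex combinations of slices of diameter $\geq 1-\varepsilon$) and then symmetrizes via $\overline{co}(C\cup -C)$ by \cite[Lemma 2.12]{blr}, you reprove the amplification. Your ``stability'' claim is correct once stated as a containment, and the precise tool is Bourgain's lemma (every nonempty relatively weakly open subset of a closed bounded convex set contains a convex combination of slices, see \cite{ggms}): a slice $S(W_0,g,\beta)$ of $W_0=\sum_i\lambda_i S_i$ contains $\sum_i\lambda_i S(S_i,g,\beta_i)$ whenever $\sum_i\lambda_i\beta_i\leq\beta$, each $S(S_i,g,\beta_i)$ is a nonempty relatively weakly open subset of $C$ and so contains a convex combination of slices of $C$, and a convex combination of such combinations is again one; hence every convex combination of slices of $W_0$ has diameter $\geq\delta_0$, which is your amplification. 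Your symmetrization inclusion $\tfrac12\bigl(S(B,g,\gamma)-S(B,-g,\gamma)\bigr)\subseteq S(K_0,g,\gamma)$ is correct (the suprema add), and freezing the subtracted points together with the rescaling arithmetic does give the stated constants. So (1) is sound, and more self-contained than the paper's two citations, at the cost of invoking Bourgain's lemma explicitly rather than your vaguer ``small norm perturbation'' formulation.

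Part (2), however, has a genuine gap exactly at the point you flagged and left unresolved, and it is not a routine check. After passing to $D:=\overline{co}(U_0)$, a nonempty relatively weakly open subset $V$ of $D$ need not intersect $U_0$ at all, so the observation that relatively weakly open subsets of $U_0$ are relatively weakly open in $C$ gives you nothing about $V$. The best generic tool again is Bourgain's lemma: $V$ contains some $\sum_i\lambda_i S(D,f_i,\alpha_i)$, and each slice $S(D,f_i,\alpha_i)$ does have diameter $\geq\delta_0$ (since $\sup_D f_i=\sup_{U_0}f_i$, it contains the nonempty relatively weakly open subset $S(U_0,f_i,\alpha_i)$ of $C$); but a convex combination $\sum_i\lambda_i A_i$ of sets of diameter $\geq\delta_0$ is only guaranteed diameter $\geq(\max_i\lambda_i)\,\delta_0$, which can be far smaller than $\delta_0$. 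So the lower bound does not pass to weakly open subsets of the hull, and no elementary convexification repairs this: that localization for the CPCP is precisely the content of \cite[Proposition 4.10 (a)]{scsewe}, a substantial theorem which the paper cites rather than reproves. The paper then finishes (2) with an elementary translation argument on $K:=\overline{C-C}$ (any weakly open $U$ meeting $C-C$ at $a-b$ gives $(b+U)\cap C\neq\emptyset$, and $\bigl((b+U)\cap C\bigr)-b\subseteq U\cap(C-C)$), which additionally makes $K$ \emph{symmetric}; note that Theorem \ref{teorenorma} later requires a symmetric set, so your unsymmetrized construction in (2), even if its amplification were repaired, would not serve the paper's subsequent use of this lemma.
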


\begin{proof}
(1) Pick $\varepsilon>0$. By \cite[Proposition 4.10 (b)]{scsewe}, consider $C\subseteq X$ a closed and convex subset such that $diam(C)=1$ and such that each convex combination of slices has diameter, at least, $1-\varepsilon$. Now  $K:=\overline{co}(C\cup -C)$ is the desired subset by the proof of \cite[Lemma 2.12]{blr}.

(2) Pick $\varepsilon>0$. By \cite[Proposition 4.10, (a)]{scsewe} consider $C\subseteq X$ a closed, convex subset such that $diam(C)=1$ and such that every non-empty relatively weakly open subset of $C$ has diameter, at least, $1-\varepsilon$. Define $K:=\overline{C-C}$. $K$ is obviously closed. Moreover, $K$ is convex because $C-C$ is convex. Moreover, as $C-C$ is symmetric, so is $K$.

In order to finish the proof pick $U$ a weakly open subset of $X$ verifying $U\cap (C-C)\neq \emptyset$, and let us prove that last set has diameter, at least, $1-\varepsilon$. Pick $x_0\in U\cap (C-C)$, so $x_0=a-b$ for suitable $a,b\in C$. Now
$$x_0\in U\Rightarrow a=x_0+b\in b+U.$$

Consequently $a\in (b+U)\cap C$, so $(b+U)\cap C\neq \emptyset$. By the assumptions on $C$ we conclude that 
$$diam((b+U)\cap C)\geq 1-\varepsilon.$$
We will show that 
\begin{equation}\label{inclusime}
\left((b+U)\cap C\right)-b\subseteq U\cap (C-C).
\end{equation}
To this aim pick $x\in ((b+U)\cap C)-b$. So there exists $u\in U$ such that $x=b+u-b$. So $x=u\in U$. Moreover, $b+u\in C$ implies that $x=b+u-b\in C-C$. So (\ref{inclusime}) holds. Hence
$$1-\varepsilon\leq diam((b+U)\cap C)=diam(((b+U)\cap C)-b)\leq diam(U\cap (C-C)).$$ 
Last inequality proves that each non-empty relatively weakly open subset of $C-C$ has diameter, at least, $1-\varepsilon$. As each non-empty relatively weakly open subset of $K$ necessarily intersects $C-C$, we conclude that each non-empty relatively weakly open subset of $K$ has diameter, at least, $1-\varepsilon$.

It is also clear that $diam(K)=diam(C-C)=2$, so we are done.\end{proof}

Now we shall prove a renorming fact for Banach spaces failing CPCP or SR.

\begin{theorem}\label{teorenorma}
Let $(X,\Vert\cdot\Vert)$ be a Banach space  and consider $\alpha\in\mathbb R^+$.

\begin{enumerate}

\item  Assume that $C\subseteq  B_X$ is a closed, symmetric and convex subset verifiying that for every    non-empty relatively  weakly open subset $U$ of $C$ one has
$$diam_{\Vert\cdot\Vert}(U)\geq \alpha.$$
Then for each $\varepsilon\in\mathbb R^+$ there exists $ \vert\cdot\vert$ an equivalent norm on $X$ satisfying
\begin{enumerate}
\item $\vert x\vert\geq \Vert x\Vert\ \ \forall x\in X$
\item The diameter of every non-empty relatively weakly open subset of $ B_{\vert\cdot\vert}$ is, at least, $ \frac{\alpha}{1+\varepsilon}$.
\end{enumerate}

\item  Assume that $C\subseteq  B_X$ is a closed, symmetric and convex subset verifiying that  $\forall T\subseteq C$ convex combination of slices of $C$ it follows 
$$diam_{\Vert\cdot\Vert}(T)\geq \alpha.$$
Then for each $\varepsilon\in\mathbb R^+$ there exists $ \vert\cdot\vert$ an equivalent norm on $X$ satisfying
\begin{enumerate}
\item $\vert x\vert\geq \Vert x\Vert\ \ \forall x\in X$
\item The diameter of each convex combination of slices of $ B_{\vert\cdot\vert}$ is, at least, $ \frac{\alpha}{1+\varepsilon}$.
\end{enumerate}

\end{enumerate}

\end{theorem}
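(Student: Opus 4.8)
The plan is to realise the new norm as the Minkowski functional (gauge) of a suitable enlargement of $C$, where the decisive choice is to enlarge by a Minkowski \emph{sum} with a small ball rather than by a convex hull. Concretely, I set $D':=\overline{C+\varepsilon B_X}$ and let $\vert\cdot\vert$ be the gauge of $D:=\frac{1}{1+\varepsilon}D'$. Since $0\in C$, one has $\frac{\varepsilon}{1+\varepsilon}B_X\subseteq D\subseteq B_X$, so $D$ is a closed, bounded, symmetric, convex and absorbing body; hence $\vert\cdot\vert$ is an equivalent norm with closed unit ball $B_{\vert\cdot\vert}=D$, and the inclusion $D\subseteq B_X$ yields $\vert x\vert\geq\Vert x\Vert$, which is (a). Since $\vert x\vert\geq\Vert x\Vert$ forces $diam_{\vert\cdot\vert}\geq diam_{\Vert\cdot\Vert}$, it is enough to bound the $\Vert\cdot\Vert$-diameters from below; any such bound transfers to $\vert\cdot\vert$.

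For part (1) the mechanism is that translations preserve $\Vert\cdot\Vert$-diameters. Let $U$ be a non-empty relatively weakly open subset meeting $C+\varepsilon B_X$ and pick $p=c_0+b_0\in U$ with $c_0\in C$ and $b_0\in\varepsilon B_X$. The set $V:=(U-b_0)\cap C$ is relatively weakly open in $C$ and non-empty, as it contains $c_0$; by hypothesis $diam_{\Vert\cdot\Vert}(V)\geq\alpha$. Translating back, $V+b_0\subseteq U\cap(C+\varepsilon B_X)$ and $diam_{\Vert\cdot\Vert}(V+b_0)=diam_{\Vert\cdot\Vert}(V)\geq\alpha$, so $diam_{\Vert\cdot\Vert}(U\cap D')\geq\alpha$. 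Finally, the dilation $D=\frac{1}{1+\varepsilon}D'$ carries relatively weakly open subsets of $D'$ to those of $D$ and divides their diameters by $1+\varepsilon$, giving (b) with constant $\frac{\alpha}{1+\varepsilon}$.

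For part (2) I run the same cancellation idea at the level of slices. The guiding observation is that a slice of $D'$ controls a slice of $C$ on the $C$-coordinate: writing $\sup_{D'}f=\sup_C f+\varepsilon\Vert f\Vert$ and using $f(b)\leq\varepsilon\Vert f\Vert$, any $x=c+b\in S(D',f,\beta)$ has $f(c)>\sup_C f-\beta$, i.e. $c\in S(C,f,\beta)$. Accordingly, given a convex combination $T=\sum_{i}\lambda_i S(D',f_i,\beta_i)$ of slices of $D'$, I compare it with $T_C:=\sum_i\lambda_i S(C,f_i,\beta_i)$, a convex combination of slices of $C$, whose diameter is at least $\alpha$ by hypothesis. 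To lift a pair back, choose $c=\sum_i\lambda_i c_i$ and $c'=\sum_i\lambda_i c_i'$ in $T_C$ with $\Vert c-c'\Vert$ close to $\alpha$; for each $i$ fix a single $b_i\in\varepsilon B_X$ almost norming $f_i$, within the slack by which $c_i$ and $c_i'$ sit strictly inside their slice, so that both $c_i+b_i$ and $c_i'+b_i$ lie in $S(D',f_i,\beta_i)$. Using the \emph{same} $b_i$ in both combinations makes the perturbations cancel, $\sum_i\lambda_i(c_i+b_i)-\sum_i\lambda_i(c_i'+b_i)=c-c'$, so both points belong to $T$ and $diam(T)\geq\Vert c-c'\Vert$; letting $\Vert c-c'\Vert\to\alpha$ gives $diam_{\Vert\cdot\Vert}(T)\geq\alpha$. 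Rescaling by $\frac{1}{1+\varepsilon}$ as in part (1) produces (b).

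The gauge computation, the equivalence of $\vert\cdot\vert$, and the fact that dilations rescale slices and relatively weakly open sets (together with their diameters) are routine. The only delicate points are attainment issues: the functionals $f_i$ need not attain their norm on $\varepsilon B_X$, and the distance to $C$ need not be attained when forming $\overline{C+\varepsilon B_X}$. I expect to dispose of both by exploiting the strict slack in the open slices (choosing $b_i$ merely \emph{almost} norming) and by passing to the closure after the diameter estimate, rather than insisting on exact maximisers. The genuine crux, though, is conceptual and occurs at the very start: using the Minkowski sum $C+\varepsilon B_X$ instead of $\overline{co}(C\cup\varepsilon B_X)$ is exactly what lets the diameter of $C$ transfer \emph{without loss} under translation, so the sole cost is the harmless rescaling factor $\frac{1}{1+\varepsilon}$.
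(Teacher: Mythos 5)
Your proposal is correct, and it is built on exactly the same key construction as the paper: the new unit ball is $\frac{1}{1+\varepsilon}\overline{C+\varepsilon B_X}$, with property (a) and the equivalence of the norm obtained just as you indicate (symmetry plus convexity give $0\in C$, whence $\frac{\varepsilon}{1+\varepsilon}B_X\subseteq B_{\vert\cdot\vert}\subseteq B_X$). Where you genuinely diverge is in how the diameter estimates are verified. For part (1) the paper proves a separate Claim: it splits a basic weak neighbourhood $V_{\beta,x_1^*,\ldots,x_n^*}$ into two halves via $V_{\gamma,y_1^*,\ldots,y_m^*}+V_{\gamma,y_1^*,\ldots,y_m^*}\subseteq V_{\beta,x_1^*,\ldots,x_n^*}$ and exhibits inside $U$ a Minkowski sum $\frac{1}{1+\varepsilon}\bigl(\bigl((c+V')\cap C\bigr)+\varepsilon\bigl((b+V'')\cap B_X\bigr)\bigr)$ whose $C$-factor is relatively weakly open in $C$; your translation trick --- $(U-b_0)\cap C$ is relatively weakly open in $C$, and translating back preserves diameters --- reaches the same bound with far less bookkeeping. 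Two routine points deserve to be made explicit in your version: the identity $(U-b_0)\cap C=(W-b_0)\cap C$ (where $U=W\cap\overline{C+\varepsilon B_X}$ with $W$ weakly open) rests on the inclusion $C+b_0\subseteq\overline{C+\varepsilon B_X}$, and the nonemptiness of $U\cap\bigl(C+\varepsilon B_X\bigr)$ rests on norm-density of $C+\varepsilon B_X$ in its closure together with weakly open sets being norm-open. For part (2) the paper gives no direct argument at all, delegating to \cite[Lemma 2.13]{blr}; your common-perturbation argument --- the identity $\sup_{D'}f=\sup_C f+\varepsilon\Vert f\Vert$, one vector $b_i$ per slice chosen almost norming $f_i$ within the minimum of the two slice slacks, so that the perturbations cancel in the difference of the two convex combinations --- is a correct, self-contained substitute for that citation, and correctly handles non-attainment by using strict slack rather than maximisers. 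In short: same renorming, but your verification of (1) streamlines the paper's Claim, and your (2) supplies in-house a proof the paper only outsources.
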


\begin{proof}

(1) Pick an arbitrary $\varepsilon\in\mathbb R^+, C\subseteq  B_X$ satisfying the hypothesis of the theorem.

Define on $X$ the norm whose unit ball is
$$\frac{1}{1+\varepsilon} \overline{C+\varepsilon B_X}.$$
This new norm on $X$ is equivalent to $\Vert\cdot \Vert$. Moreover, as $B_{\vert\cdot\vert}\subseteq B_X$, one has
$$\Vert x\Vert\leq \vert x\vert\ \ \forall x\in X.$$
Let us prove (b). To this aim pick $U\subseteq B_{\vert\cdot\vert}$ a non-empty relatively weakly open set. We can assume  that there exist $ x_0\in B_{\vert\cdot\vert}$, $\beta\in\mathbb R^+$ and $ x_1^*,\ldots,x_n^*\in X^*$ such that
$$U=(x_0+V_{\beta,x_1^*,\ldots,x_n^*})\bigcap \frac{1}{1+\varepsilon}\overline{C+\varepsilon  B_X},$$
where $V_{\beta,x_1^*,\ldots, x_n^*}:=\{x\in X: \vert x_i^*(x)\vert<\beta\ \forall i\in\{1,\ldots,n\}\}$. From linearity of the weak topology we can ensure the existence of $\gamma\in\mathbb R^+, y_1^*,\ldots, y_m^*\in X^*\ $ such that 
\begin{equation}\label{incluentocero}
 V_{\gamma,y_1^*,\ldots, y_m^*}+V_{\gamma,y_1^*,\ldots, y_m^*}\subseteq V_{\beta,x_1^*,\ldots, x_n^*}.
\end{equation}
Consider $ v\in (x_0+V_{\gamma,y_1^*,\ldots,y_m^*})\cap \left(\frac{1}{1+\varepsilon} (C+\varepsilon  B_X)\right)$. 
Then, on the one hand
\begin{equation}\label{v por bola}v=\frac{1}{1+\varepsilon} (c+\varepsilon b)\ \ c\in C, b\in B_X
\end{equation}
On the other hand
\begin{equation}\label{v por topologia lineal}v=x_0+x\ \ \ x\in V_{\gamma,y_1^*,\ldots,y_m^*}
\end{equation}
Now we can prove the following

\begin{claim}
The following inclusion holds
\begin{equation}\label{inclusion de entornos}
A:=\frac{1}{1+\varepsilon} \left (\left(c+V_{\frac{\gamma(1+\varepsilon)}{2},y_1^*,\ldots, y_m^*}\bigcap C\right)+\varepsilon \left (b+V_{\frac{\gamma (1+\varepsilon)}{2  \varepsilon},y_1^*,\ldots, y_m^*}\bigcap  B_X \right ) \right )
\end{equation}
$$ \subseteq (v+V_{\gamma,y_1^*,\ldots, y_m^*})\bigcap \left(\frac{1}{1+\varepsilon} (C+\varepsilon  B_X)\right).$$
\end{claim}

\begin{proof}
Pick $z\in A$. Then 
$$z=\frac{1}{1+\varepsilon}(c+x+\varepsilon(b+y)),$$
where $x\in V_{\frac{\gamma (1+\varepsilon)}{2},y_1^*,\ldots, y_m^*}$, $y\in V_{\frac{\gamma (1+\varepsilon)}{2 \varepsilon},y_1^*,\ldots, y_m^*}$, $c+x\in C$ and $ b+y\in B_X$.

As $c+x\in C$ and $ b+y\in B_X$ then 
$$z=\frac{1}{1+\varepsilon}(c+x+\varepsilon(b+y)\in \frac{1}{1+\varepsilon}(C+\varepsilon B_X).$$
On the other hand, bearing in mind that $v=\frac{1}{1+\varepsilon}(c+\varepsilon b)$, we conclude that 
$$z=v+\frac{x+\varepsilon y}{1+\varepsilon}.$$
So, in order to finish the proof of the claim, it is enough to prove that $\frac{x+\varepsilon y}{1+\varepsilon}\in V_{\gamma,y_1^*,\ldots, y_m^*}$. To this aim pick $i\in\{1,\ldots, m\}$. Then
$$\left\vert y_i^*\left( \frac{x+\varepsilon y}{1+\varepsilon}\right) \right\vert\leq \frac{\vert y_i^*(x)\vert+\varepsilon \vert y_i^*(y)\vert}{1+\varepsilon}<\frac{\frac{\gamma(1+\varepsilon)}{2}+\varepsilon \frac{\gamma (1+\varepsilon)}{2\varepsilon}}{1+\varepsilon}=\gamma,$$
so we are done.
\end{proof}

 Furthermore (\ref{v por topologia lineal}) implies
$$
(v+V_{\gamma,y_1^*,\ldots, y_m^*})\bigcap \left(\frac{1}{1+\varepsilon} (C+\varepsilon  B_X)\right)=$$ $$(x_0+x+V_{\gamma,y_1^*,\ldots, y_m^*})\bigcap \left(\frac{1}{1+\varepsilon} (C+\varepsilon  B_X)\right)
$$
$$\subseteq (x_0+V_{\gamma,y_1^*,\ldots, y_m^*}+V_{\gamma,y_1^*,\ldots, y_m^*})\bigcap \left(\frac{1}{1+\varepsilon} (C+\varepsilon  B_X)\right)$$ $$\subseteq (x_0+V_{\beta,x_1^*,\ldots, x_n^*})\bigcap \left(\frac{1}{1+\varepsilon} (C+\varepsilon  B_X)\right)\subseteq U.$$
 By assumption, we have that 
$$diam \left (\frac{1}{1+\varepsilon} \left (\left(c+V_{\frac{\gamma(1+\varepsilon)}{2},y_1^*,\ldots, y_m^*}\bigcap C\right)+\varepsilon \left (b+V_{\frac{\gamma (1+\varepsilon)}{2  \varepsilon},y_1^*,\ldots, y_m^*}\bigcap  B_X \right ) \right ) \right )$$ $$\geq \frac{\alpha}{1+\varepsilon},$$
hence
$$diam_{\vert\cdot\vert}(U)\geq diam_{\Vert\cdot\Vert}(U)\geq \frac{\alpha}{1+\varepsilon},$$
which proves (a).

The proof of (2) follows from Lemma \ref{simetrizante} and \cite[Lemma 2.13]{blr}.
\end{proof}

From above results and from \cite[Corollary 3.2]{scsewe} we get the following

\begin{theorem}\label{caraisoslices}
Let $X$ be a Banach space.

\begin{enumerate}

\item $X$ has the RNP if, and only if, each equivalent norm on $X$ verifies that its unit ball contains slices of arbitrarily small diameter.

\item $X$ has the CPCP if, and only if, each equivalent norm on $X$ verifies that its unit ball contains non-empty relatively weakly open subsets of arbitrarily small diameter.

\item $X$ is SR if, and only if, each  equivalent norm on $X$ verifies that its unit ball contains convex combinations of slices of arbitrarily small diameter.
\end{enumerate}

\end{theorem}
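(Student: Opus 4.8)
The plan is to establish all three equivalences by a single template, reducing the substance to the two results already proved, Lemma~\ref{simetrizante} and Theorem~\ref{teorenorma}, together with the subset characterizations of the three properties: $X$ has the RNP (resp.\ the CPCP, resp.\ is SR) exactly when every closed bounded convex subset of $X$ contains slices (resp.\ nonempty relatively weakly open subsets, resp.\ convex combinations of slices) of arbitrarily small diameter. For the RNP this is the classical dentability characterization \cite{DP}, and for the CPCP and SR it is \cite[Corollary 3.2]{scsewe}.

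For the ``only if'' direction I would use the isomorphic nature of the three properties. Each of RNP, CPCP and SR is invariant under passing to an equivalent norm, since the subset characterizations just quoted refer only to closed bounded convex sets and to ``arbitrarily small diameter'', both of which are unchanged (up to comparable constants) under an equivalent renorming. Hence, fixing an equivalent norm $\vert\cdot\vert$, the space $(X,\vert\cdot\vert)$ still has the property, and as $B_{\vert\cdot\vert}$ is itself a closed bounded convex set the subset characterization applied to it yields sets of the required type and arbitrarily small diameter inside $B_{\vert\cdot\vert}$. This disposes of the forward implication in each of (1), (2), (3).

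For the ``if'' direction I would argue by contraposition, manufacturing from a failure of the property an equivalent norm whose ball carries small sets whose diameters are bounded away from $0$. Take SR (the CPCP case is verbatim the same with part (2) of the Lemma and part (1) of the Theorem). Assuming $X$ fails SR and fixing $\varepsilon\in(0,1)$, Lemma~\ref{simetrizante}(1) provides a closed, bounded, symmetric, convex set $K$ with $diam(K)=2$ all of whose convex combinations of slices have diameter at least $1-\varepsilon$. The point that makes Theorem~\ref{teorenorma} applicable is that $K\subseteq B_X$ automatically: if $x\in K$ then $-x\in K$ by symmetry, so $2\Vert x\Vert=\Vert x-(-x)\Vert\le diam(K)=2$ and hence $\Vert x\Vert\le 1$. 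Thus $K$ meets the hypotheses of Theorem~\ref{teorenorma}(2) with $\alpha=1-\varepsilon$, and the theorem produces an equivalent norm $\vert\cdot\vert$ for which every convex combination of slices of $B_{\vert\cdot\vert}$ has diameter at least $\frac{1-\varepsilon}{1+\varepsilon}>0$. Consequently $B_{\vert\cdot\vert}$ cannot contain convex combinations of slices of arbitrarily small diameter, which is precisely the negation of the right-hand side of (3); the CPCP case follows identically, giving (2).

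Item (1) for the RNP is the classical renorming theorem for dentability and I would simply quote it from \cite{DP}; it also fits the template above once one has a symmetric non-dentable witness set and the slice-analogue of Theorem~\ref{teorenorma}. The genuinely new content having already been isolated in Lemma~\ref{simetrizante} and Theorem~\ref{teorenorma}, I expect the only delicate point to be the bookkeeping in the contrapositive: verifying that the witness set really satisfies the standing hypothesis $C\subseteq B_X$ of Theorem~\ref{teorenorma} (handled by the symmetry-and-diameter observation above) and that the surviving lower bound $\frac{\alpha}{1+\varepsilon}$ stays strictly positive, so that ``arbitrarily small diameter'' truly fails. The remaining transfer between the two formulations of the properties is routine.
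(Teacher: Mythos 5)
Your proposal is correct and is essentially the paper's own argument: the paper proves Theorem~\ref{caraisoslices} precisely by combining \cite{DP} for the RNP, \cite[Corollary 3.2]{scsewe} for the subset characterizations, and the contrapositive route through Lemma~\ref{simetrizante} and Theorem~\ref{teorenorma} for CPCP and SR. Your explicit verification that the symmetric witness set $K$ with $diam(K)=2$ satisfies $K\subseteq B_X$ (via $2\Vert x\Vert=\Vert x-(-x)\Vert\leq 2$) is a detail the paper leaves implicit, and it is handled correctly.
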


In order to characterize the CPCP and SR in terms of Lip-slices, we need the following lemma.

\begin{lemma}\label{renormadebil}
Let $X$ be a Banach space. The following assertions are equivalent:

\begin{enumerate}
\item $B_X$ contains non-empty relatively weakly open subsets of arbitrarily small diameter.
\item $S_X$ contains non-empty relatively weakly open subsets of arbitrarily small diameter.
\item $S_X$ contains non-empty relatively open subsets of the Lip topology of arbitrarily small diameter.
\end{enumerate}

\end{lemma}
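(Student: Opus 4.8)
The plan is to reduce the three-way equivalence to two manageable pieces. Since Theorem \ref{caratopolo} asserts that the weak and the Lip-topologies coincide on $S_X$, a subset of $S_X$ is relatively weakly open if and only if it is relatively open for the Lip-topology; as the diameter is a purely metric quantity, the two families of small-diameter relatively open subsets of $S_X$ are literally the same, so $(2)\Leftrightarrow(3)$ is immediate and needs no further argument. It therefore suffices to prove $(1)\Leftrightarrow(2)$. Moreover, when $X$ is finite-dimensional the weak topology agrees with the norm topology and all three statements hold trivially, so I would assume throughout that $X$ is infinite-dimensional.

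For $(1)\Rightarrow(2)$ I would rely on the elementary fact that, in an infinite-dimensional space, every non-empty relatively weakly open subset $U=W\cap B_X$ of $B_X$ meets $S_X$: writing a basic neighbourhood $x_0+V\subseteq W$ with $V=\{x:\abs{f_i(x)}<\beta,\ 1\le i\le n\}$, the subspace $Y=\bigcap_{i=1}^n\ker f_i$ is non-trivial, and moving along a ray $x_0+tv$ with $v\in Y$, $\Vert v\Vert=1$, one reaches $S_X$ at some $t_0\ge 0$ while staying inside $x_0+V$. Consequently, if $U\subseteq B_X$ is relatively weakly open with $\operatorname{diam}(U)<\varepsilon$, then $U\cap S_X$ is non-empty, it is relatively weakly open in $S_X$, and $\operatorname{diam}(U\cap S_X)\le\operatorname{diam}(U)<\varepsilon$. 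This produces the desired sets on $S_X$ with no extra work.

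The substantial direction is $(2)\Rightarrow(1)$, and this is where I expect the main obstacle. Starting from a relatively weakly open $G=O\cap S_X$ with $\operatorname{diam}(G)<\delta$, I would fix $x_0\in G$ and a supporting functional $g\in S_{X^*}$ with $g(x_0)=1$, and write $O=x_0+V$ with $V=\{x:\abs{f_i(x)}<\beta,\ 1\le i\le n\}$. I would then cut out of $B_X$ the relatively weakly open set
$$A:=\Big\{x\in B_X:\ g(x)>1-\eta,\ \abs{f_i(x-x_0)}<\tfrac{\beta}{2}\ (1\le i\le n)\Big\},$$
which contains $x_0$, and show that for $\eta$ small enough $\operatorname{diam}(A)$ is controlled by $\delta$ and $\eta$. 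The key point is that any $x\in A$ satisfies $\Vert x\Vert\ge g(x)>1-\eta$, so its radial projection $\widehat{x}=x/\Vert x\Vert$ lies within $\eta$ of $x$; the delicate step, which is the heart of the argument, is to verify that $\widehat{x}\in G$. This amounts to estimating $\abs{f_i(\widehat{x}-x_0)}$, and here the buffer $\beta/2$ together with the bound $\abs{1/\Vert x\Vert-1}<\eta/(1-\eta)$ forces $\widehat{x}\in O\cap S_X=G$ provided $\eta<\beta/\bigl(2(M+\beta)\bigr)$, where $M=\max_i\Vert f_i\Vert$. Granting this, for $x,y\in A$ one obtains $\Vert x-y\Vert\le\Vert x-\widehat{x}\Vert+\Vert\widehat{x}-\widehat{y}\Vert+\Vert\widehat{y}-y\Vert<2\eta+\delta$, whence $\operatorname{diam}(A)\le 2\eta+\delta$. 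Given a target $\varepsilon$, choosing first $\delta<\varepsilon/2$ (which fixes $\beta$ and $M$) and then $\eta<\min\{\varepsilon/4,\ \beta/(2(M+\beta))\}$ yields $\operatorname{diam}(A)<\varepsilon$, completing $(2)\Rightarrow(1)$. The only genuinely technical ingredient is the radial-projection estimate controlling $\abs{f_i(\widehat{x}-x_0)}$, which is precisely where the choice of the shrinking parameter $\eta$ relative to $\beta$ must be made.
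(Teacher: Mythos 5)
Your proposal is correct, and its substantial direction $(2)\Rightarrow(1)$ takes a genuinely different route from the paper. The paper's proof of $(2)\Rightarrow(1)$ invokes an external result (Lemma 2.1 of Ivakhno--Kadets) to re-represent a weak neighbourhood of a point $x\in S_X$ as an intersection of slices $\bigcap_{i=1}^n S(B_X,y_i^*,\alpha)$ with \emph{norm-one} functionals $y_i^*$ and small width $\alpha$; membership in such a slice automatically forces $\Vert x\Vert>1-\alpha$, and then the same radial-projection estimate you use ($\Vert x-x/\Vert x\Vert\Vert=1-\Vert x\Vert$) bounds the diameter by $\delta/2+2\alpha$. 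You achieve the proximity-to-the-sphere control instead by adjoining a single slice $\{g>1-\eta\}$ cut by a Hahn--Banach norming functional $g$ of the base point $x_0$, keeping the original defining functionals with a shrunken buffer $\beta/2$; this makes the argument self-contained (no citation needed) at the cost of the explicit bookkeeping $\eta<\beta/(2(M+\beta))$, and your quantitative accounting is in fact slightly cleaner than the paper's, whose choice $\alpha<\min\{\delta/2,1\}$ is a harmless slip (one needs $\alpha<\delta/4$ for $\frac{\delta}{2}+2\alpha<\delta$). You also spell out $(1)\Rightarrow(2)$ --- that in infinite dimensions every non-empty relatively weakly open subset of $B_X$ meets $S_X$ via a ray inside $x_0+\bigcap_i\ker f_i$, with the finite-dimensional case handled trivially --- whereas the paper simply declares this direction trivial; your version closes that small gap. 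The $(2)\Leftrightarrow(3)$ step via Theorem \ref{caratopolo} is identical in both.
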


\begin{proof}
(2)$\Leftrightarrow$(3) is known by Theorem \ref{caratopolo}.

(1)$\Rightarrow$(2) is trivial.

(2)$\Rightarrow$(1).

Consider $\delta>0$ and  $U:=\bigcap\limits_{i=1}^n S(S_X,x_i^*,\varepsilon_i)$ a basic non-empty relatively weakly open subset of $S_X$ whose diameter is smaller than $\frac{\delta}{2}$. Define
$$V:=\bigcap\limits_{i=1}^n S(B_X,x_i^*,\varepsilon_i),$$
which is a non-empty relatively weakly open subset of $B_X$ and pick $x\in V\cap S_X$. Pick $0<\alpha<\min\{\frac{\delta}{2},1\}$ and consider by \cite[Lemma 2.1]{ik}, for each $i\in\{1,\ldots,n\}$, a functional $y_i^*\in S_{X^*}$ such that
$$x\in\bigcap\limits_{i=1}^n S(B_X,y_i^*,\alpha)\subseteq V.$$
 Consequently $x\in \bigcap\limits_{i=1}^n S(S_X,y_i^*,\alpha)\subseteq U$, so $diam\left(\bigcap\limits_{i=1}^n S(S_X,y_i^*,\alpha)\right)<\frac{\delta}{2}$. Now we will prove
 $$diam\left(\bigcap\limits_{i=1}^n S(B_X,y_i^*,\alpha)\right)<\delta.$$
  To this aim pick $x,y\in \bigcap\limits_{i=1}^n S(B_X,y_i^*,\alpha)$. Now $\frac{x}{\Vert x\Vert}, \frac{y}{\Vert y\Vert}\in \bigcap\limits_{i=1}^n S(S_X,y_i^*,\alpha)$, so $\left\Vert \frac{x}{\Vert x\Vert}-\frac{y}{\Vert y\Vert} \right\Vert<\frac{\delta}{2}$. Hence
 $$\Vert x-y\Vert\leq \left\Vert \frac{x}{\Vert x\Vert}-\frac{y}{\Vert y\Vert} \right\Vert+\left\Vert x-\frac{x}{\Vert x\Vert}\right\Vert +\left\Vert y-\frac{y}{\Vert y\Vert}\right\Vert$$
 $$<\frac{\delta}{2}+1-y_1^*(x)+1-y^*(y)<\frac{\delta}{2}+2\alpha<\delta.$$
\end{proof}

A similar result can be stated replacing ``weakly open sets" with ``convex combination of slices" (and consequently, with ``slices").

\begin{lemma}\label{renormaconvex}

Let $X$ be a Banach space. The following assertions are equivalent:

\begin{enumerate}
\item $B_X$ contains convex combinations of slices of arbitrarily small diameter.
\item $B_X$ contains convex combinations of slices of $S_X$ of arbitrarily small diameter.
\item $B_X$ contains convex combinations of Lip-slices of arbitrarily small diameter.
\end{enumerate}

\end{lemma}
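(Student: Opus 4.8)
The plan is to reduce the three equivalences to two comparisons: between Lip-slices and linear slices of $S_X$ (yielding $(2)\Leftrightarrow(3)$), and between slices of $S_X$ and slices of $B_X$ through radial projection (yielding $(1)\Leftrightarrow(2)$). Throughout, the guiding principle is that every inclusion between the relevant sets can only decrease diameters, so it suffices to exhibit, inside each candidate set, a set of the other type. For $(2)\Leftrightarrow(3)$ I would first record that when $f$ is linear the Lip-slice $S(S_X,f,\varepsilon)$ is exactly the ordinary slice $\{v\in S_X:f(v)>\Vert f\Vert-\varepsilon\}$, since $\frac{f(x_1)-f(x_2)}{\Vert x_1-x_2\Vert}=f\bigl(\frac{x_1-x_2}{\Vert x_1-x_2\Vert}\bigr)$ and every unit vector $v$ arises as $\frac{x_1-x_2}{\Vert x_1-x_2\Vert}$ (take $x_1=v$, $x_2=0$). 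Consequently every slice of $S_X$ is a Lip-slice, so any convex combination of slices of $S_X$ is literally a convex combination of Lip-slices, which gives $(2)\Rightarrow(3)$ with no loss. For $(3)\Rightarrow(2)$, start from a small convex combination $\sum_{i=1}^n\lambda_i S_i$ of Lip-slices, pick $x_i\in S_i$, and apply the Lemma preceding Theorem \ref{caratopolo} to produce a slice $T_i$ of $S_X$ with $x_i\in T_i\subseteq S_i$; then $\sum_{i=1}^n\lambda_i T_i$ is a non-empty convex combination of slices of $S_X$ sitting inside $\sum_{i=1}^n\lambda_i S_i$, hence of no larger diameter.

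For $(1)\Rightarrow(2)$ the argument is immediate: given a small convex combination $\sum_{i=1}^n\lambda_i S(B_X,f_i,\alpha_i)$, replace each factor by $S(S_X,f_i,\alpha_i)=S(B_X,f_i,\alpha_i)\cap S_X$, which is non-empty because $\sup_{S_X}f_i=\Vert f_i\Vert$ and which is contained in the ball-slice; the new convex combination of slices of $S_X$ lies inside the original and so is at least as small. The substantive direction is $(2)\Rightarrow(1)$. Fix $\delta>0$ and a tolerance $\eta>0$, and take a convex combination of slices of $S_X$ of diameter $<\delta$. Rescaling the functionals I may assume each $\Vert f_i\Vert=1$, and — this is the step that here replaces the appeal to \cite[Lemma 2.1]{ik} used in Lemma \ref{renormadebil} — I may shrink each width so that $\varepsilon_i\leq\eta$: passing to a smaller slice in each factor only shrinks the convex combination, keeps every factor non-empty, and preserves the diameter bound $<\delta$. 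Call the resulting set $K$.

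Now I would compare $K$ with the ball-combination $\widetilde K:=\sum_{i=1}^n\lambda_i S(B_X,f_i,\varepsilon_i)$ built from the same data. If $x_i\in S(B_X,f_i,\varepsilon_i)$ then $f_i(x_i)>1-\varepsilon_i>0$, so $\frac{x_i}{\Vert x_i\Vert}\in S(S_X,f_i,\varepsilon_i)$ and $\Vert x_i-\frac{x_i}{\Vert x_i\Vert}\Vert=1-\Vert x_i\Vert\leq 1-f_i(x_i)<\varepsilon_i\leq\eta$. Writing two points of $\widetilde K$ as $\sum_i\lambda_i x_i$ and $\sum_i\lambda_i y_i$, the triangle inequality bounds their distance by the distance between the radially projected points $\sum_i\lambda_i\frac{x_i}{\Vert x_i\Vert}$ and $\sum_i\lambda_i\frac{y_i}{\Vert y_i\Vert}$, which both lie in $K$ and are therefore at distance $<\delta$, plus an error less than $2\eta$. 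Hence $diam(\widetilde K)<\delta+2\eta$, and $\widetilde K$ is a convex combination of slices of $B_X$; as $\delta$ and $\eta$ are arbitrary, $(1)$ follows. The one point demanding care is that the radial projection has to control all the summands simultaneously, which is exactly why the reduction to small widths must be carried out first; this is the only place where the convex-combination case genuinely differs from, and is in fact simpler than, the weakly-open case of Lemma \ref{renormadebil}, since shrinking widths cannot destroy non-emptiness of a convex combination whereas it can destroy non-emptiness of an intersection.
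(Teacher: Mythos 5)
Your proof is correct and follows essentially the same route as the paper: its proof of $(2)\Rightarrow(1)$ is exactly your radial-projection argument after first shrinking all widths below $\min\{\frac{\delta}{4},1\}$ (the paper uses one common $\alpha$ instead of individual $\varepsilon_i$, which is immaterial), and it likewise treats $(1)\Rightarrow(2)$ as clear. The only difference is that where the paper disposes of $(2)\Leftrightarrow(3)$ by citing Theorem \ref{caratopolo}, you invoke the Lemma preceding that theorem to place a slice $T_i\subseteq S_i$ inside each Lip-slice; this is in fact the more precise reference, since convex combinations of slices are not open sets and the equality of topologies alone does not literally yield the equivalence.
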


\begin{proof}
(2)$\Leftrightarrow$(3) is known by Theorem \ref{caratopolo}.

(1)$\Rightarrow$(2) is clear.

(2)$\Rightarrow$(1) is similar to (2)$\Rightarrow$(1) in Lemma \ref{renormadebil}, but we shall provide a proof for the sake of completeness.

Consider an arbitrary $\delta>0$ and $C:=\sum_{i=1}^n \lambda_i S(S_X,x_i^*,\alpha)$ a convex combination of slices whose diameter is less than $\frac{\delta}{2}$. We can assume, up considering smaller numbers, that $0<\alpha<\min\{\frac{\delta}{4},1\}$. We shall prove that $diam\left(\sum_{i=1}^n \lambda_i S(B_X,x_i^*,\alpha)\right)\leq\delta$. To this aim pick $\sum_{i=1}^n \lambda_i x_i,\sum_{i=1}^n \lambda_i y_i\in \sum_{i=1}^n \lambda_i S(B_X,x_i^*,\alpha)$. Now it is clear that 
$$\sum_{i=1}^n \lambda_i \frac{x_i}{\Vert x_i\Vert},\sum_{i=1}^n \lambda_i \frac{y_i}{\Vert y_i\Vert}\in \sum_{i=1}^n \lambda_i S(S_X,x_i^*,\alpha)\Rightarrow \left\Vert \sum_{i=1}^n \lambda_i \frac{x_i}{\Vert x_i\Vert}-\sum_{i=1}^n \lambda_i \frac{y_i}{\Vert y_i\Vert}\right\Vert<\frac{\delta}{2}. $$ 
Now
$$\left\Vert \sum_{i=1}^n \lambda_i (x_i-y_i)\right\Vert$$
$$\leq \left\Vert \sum_{i=1}^n \lambda_i \frac{x_i}{\Vert x_i\Vert}-\sum_{i=1}^n \lambda_i \frac{y_i}{\Vert y_i\Vert}\right\Vert+\sum_{i=1}^n \lambda_i \left\Vert x_i-\frac{x_i}{\Vert x_i\Vert}\right\Vert+\sum_{i=1}^n \lambda_i \left\Vert y_i-\frac{y_i}{\Vert y_i\Vert}\right\Vert$$
$$<\frac{\delta}{2}+\sum_{i=1}^n \lambda_i \alpha+\sum_{i=1}^n \lambda_i \alpha=\frac{\delta}{2}+2\alpha<\delta.$$
Consequently, $diam\left(\sum_{i=1}^n \lambda_i S(B_X,x_i^*,\alpha)\right)\leq\delta$, as desired.
\end{proof}

Now using above Lemmata and Theorem \ref{teorenorma} we get the desired characterization of RNP, CPCP and SR.

\begin{theorem}\label{caralipsiso}

Let $X$ be a Banach space.

\begin{enumerate}
\item $X$ has the RNP if, and only if, for every equivalent norm on $X$ it follows that its new unit sphere contains Lip-slices of arbitrarily small diameter.

\item $X$ has the CPCP if, and only if, for every equivalent norm on $X$ it follows that its new unit sphere contains  non-empty relatively open subsets, for the Lip topology, of arbitrarily small diameter.  

\item $X$ is SR if, and only if, for every equivalent norm on $X$ it follows that its new unit ball contains convex combinations of Lip-slices of arbitrarily small diameter.
\end{enumerate}

\end{theorem}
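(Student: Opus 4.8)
The plan is to derive all three equivalences from the renorming characterizations of Theorem~\ref{caraisoslices}, converting each statement about the unit \emph{ball} into the corresponding statement about the unit \emph{sphere} (or about Lip-slices) by means of Theorem~\ref{caratopolo} together with the transfer Lemmas~\ref{renormadebil} and~\ref{renormaconvex}. Since each of the three properties is quantified over \emph{every} equivalent norm, and since the notions of Lip-slice and Lip-topology are defined intrinsically from whatever norm is fixed, it is enough to prove, for one arbitrary equivalent norm, that the ball-side property supplied by Theorem~\ref{caraisoslices} is equivalent to the sphere/Lip-side property stated here; applying that equivalence norm by norm then gives the theorem.

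Parts (2) and (3) are then immediate. For (2) I combine Theorem~\ref{caraisoslices}(2) with the equivalence (1)$\Leftrightarrow$(3) of Lemma~\ref{renormadebil}: $X$ has the CPCP if and only if every renorming has $B_X$ containing non-empty relatively weakly open sets of arbitrarily small diameter, which, norm by norm, is equivalent to $S_X$ containing non-empty relatively Lip-open sets of arbitrarily small diameter. For (3) the pattern is identical, using Theorem~\ref{caraisoslices}(3) and the equivalence (1)$\Leftrightarrow$(3) of Lemma~\ref{renormaconvex} to replace convex combinations of slices of $B_X$ by convex combinations of Lip-slices of $B_X$, again applied to each equivalent norm. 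Nothing more than bookkeeping is needed here once the transfer lemmas are invoked.

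Part (1) is the only one demanding new work, since the excerpt provides transfer lemmas for weakly open sets and for convex combinations of slices but none for a single slice, and the RNP cannot be reduced to the CPCP or to strong regularity. I would therefore prove the slice analogue of Lemma~\ref{renormadebil}: for a fixed norm, $B_X$ contains slices of arbitrarily small diameter if and only if $S_X$ contains Lip-slices of arbitrarily small diameter. One direction is easy: normalizing $\Vert f\Vert=1$, a small slice $S(B_X,f,\alpha)$ produces the slice $\{x\in S_X:f(x)>1-\alpha\}$ of $S_X$, which is non-empty, of no larger diameter, and is itself a Lip-slice because slices of $S_X$ are Lip-slices. The converse is the crux. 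Given a Lip-slice $S$ of small diameter, I first apply the Lemma preceding Theorem~\ref{caratopolo} to obtain, around a point of $S$, a genuine slice $T=\{v\in S_X:\varphi(v)>\Vert\varphi\Vert-\varepsilon\}$ of $S_X$ with $T\subseteq S$, hence of small diameter; after normalizing $\Vert\varphi\Vert=1$ and, if necessary, shrinking $\varepsilon$, I may assume $\varphi\in S_{X^*}$ with $\varepsilon$ as small as I please, since for a norm-one functional the slice $\{v\in S_X:\varphi(v)>1-\varepsilon\}$ remains non-empty and stays inside $T$. Passing from this slice of $S_X$ to the slice $S(B_X,\varphi,\varepsilon)$ of $B_X$ is then carried out exactly as in the proof of (2)$\Rightarrow$(1) of Lemma~\ref{renormadebil}, specialized to a single functional: for $x,y\in S(B_X,\varphi,\varepsilon)$ one has $x/\Vert x\Vert,\ y/\Vert y\Vert\in\{v\in S_X:\varphi(v)>1-\varepsilon\}$, and the defect introduced by normalization is controlled by $\left\Vert x-\frac{x}{\Vert x\Vert}\right\Vert=1-\Vert x\Vert\leq 1-\varphi(x)<\varepsilon$, whence $\Vert x-y\Vert\leq \operatorname{diam}(T)+2\varepsilon$. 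The main obstacle is precisely this sphere-to-ball passage for slices: one must keep the functional of norm one, keep the resulting ball-slice small, and absorb the radial defect $1-\Vert x\Vert$, all of which the value of $\varphi$ on the slice is tailored to bound. Combining this slice analogue with Theorem~\ref{caraisoslices}(1) finishes (1).
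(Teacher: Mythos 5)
Your proposal is correct and takes essentially the same route as the paper, whose entire proof is the one-line observation that the theorem follows by combining the ball-side characterizations of Theorem \ref{caraisoslices} with the transfer Lemmas \ref{renormadebil} and \ref{renormaconvex}, exactly as you do in parts (2) and (3). For part (1) the paper handles the slice case only through its remark that Lemma \ref{renormaconvex} also holds with ``slices'' (the $n=1$ case of convex combinations of slices), whereas you prove this single-slice transfer explicitly via the key Lemma preceding Theorem \ref{caratopolo}; your treatment there is sound --- normalizing $\varphi$, shrinking the slice width so it stays inside $T$, and absorbing the radial defect via $\left\Vert x-\frac{x}{\Vert x\Vert}\right\Vert=1-\Vert x\Vert\leq 1-\varphi(x)$ --- so this is simply a fleshed-out version of the argument the paper leaves implicit.
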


The above result shows that RNP, CPCP and SR are properties which depend on the natural uniformity of the Banach space, given by the metric and the linear structure. In fact, roughly speaking, we can say that every property in Banach spaces characterized in terms of the diameter of relevant subsets for the weak topology is determined by the natural uniformity in the space and so the property also can be characterized in terms of relevant subsets for the Lip topology. This is too the case, as we have seen, for the diameter two properties and Daugavet property, for example.

\end{document}